\theoremstyle{plain}
\newtheorem{theorem}{Theorem}
\newtheorem{lemma}[theorem]{Lemma}
\newtheorem{proposition}[theorem]{Proposition}
\newtheorem{corollary}[theorem]{Corollary}
\newtheorem{conjecture}[theorem]{Conjecture}
\theoremstyle{definition}
\newtheorem{definition}[theorem]{Definition}
\newtheorem*{remark}{Remark}
\newtheorem*{example}{Example}
\numberwithin{theorem}{section} 
\theoremstyle{plain}
\numberwithin{equation}{section}
\newcommand{\N}{\mathbb{N}}
\newcommand{\Z}{\mathbb{Z}}
\newcommand{\Q}{\mathbb{Q}}
\newcommand{\C}{\mathbb{C}}
\newcommand{\F}{\mathbb{F}}
\newcommand{\roi}{{\mathcal O}}
\renewcommand{\phi}{\varphi}
\newcommand{\B}{\mathcal{B}}
\newcommand{\va}{\mathbf{a}}
\newcommand{\vb}{\mathbf{b}}
\newcommand{\vA}{\mathbf{A}}
\newcommand{\vB}{\mathbf{B}}
\newcommand{\vzero}{\mathbf{0}}
\DeclareMathOperator{\rado}{Rado}
\DeclareMathOperator{\RP}{RP}
\providecommand{\abs}[1]{\left\vert#1\right\vert}
\tikzset{commutative diagrams/.cd,
	smoffset/.style={start anchor=center,end anchor=center,draw=none}
}
\title{A Multicolored Variant of Rado's Theorem in Ramsey Theory}
\author{Hongyi Zhou}
\address{Department of Mathematical Sciences, Carnegie Mellon University\\Pittsburgh, PA 15213}
\email{hongyizh@andrew.cmu.edu}
\begin{document}

	\begin{abstract}
		A classical question in combinatorial number theory asks  whether an equation has a solution inside a particular subset of its domain. The Rado's Theorem gives a set of necessary and sufficient conditions for a systems of linear equations to have a monochromatic solution whenever the positive integers are finitely colored.
		In this paper, we provide a variant of this theorem. For $k \ge 2$, we present conditions such that, when the set of variables is partitioned into $k$ subsets, there is a solution such that the variables of each subset are monochromatic, which we call a semi-monochromatic solution. We adopt the smod $p$ coloring from \cite{grs90} but turn the existence of semi-monochromatic solution into the existence of common roots of linear polynomials. With this idea, one can further generalize the theorem to systems of linear equations over general algebraic number fields.
	\end{abstract}

	\keywords{Linear Equation, Monochromatic Solution, Polynomial, Prime Number}

	\maketitle

	\section{Introduction}

	\subsection{Background}
	
	Ramsey theory is a branch of combinatorics which studies properties of partitions and subsets of a set of items (such as numbers, graphs, etc.). To help describe the partitions, we say an $r$-coloring of elements in a set $S$ is a function $\chi:S \to C$, where $C$ is the set of colors. We say a subset $T \subseteq S$ is \emph{monochromatic} if $\chi(T) = \{c_\ast\}$ for some $c_\ast \in C$. Typical problems in Ramsey theory ask about the existence of monochromatic substructures when a sufficiently large structure is finitely colored.

	One result in Ramsey Theory is Schur's Theorem (\cite{grs90}, Chapter 3, Theorem 1), which states that there exist three positive integers $x, y, z$ such that $x + y = z$, whenever $\Z_+$ is finitely colored. Another result is Van der Waerden's Theorem (\cite{grs90}, Chapter 2, Theorem 1), which shows the existence of monochromatic arithmetic progress in $\Z_+$ of arbitrary length, whenever $\Z_+$ is finitely colored.
	
	Rado's Theorem generalizes the above theorems. It shows that a system has monochromatic solutions in $\Z_+$ whenver $\Z_+$ is finitely colored if and only if it satisfies some conditions. To state the conditions and the theorem, we first give two definitions.
	
	\begin{definition}
		\label{def:regular}
		Let $S$ be a system of equations in variables $x_1, \dots, x_n$, with coefficients in $\Z$. We say that $S$ is \emph{$r$-regular} if, given any $r$-coloring of $\Z_+$, there exists monochromatic $z_1, \dots, z_n \in \Z_+$ that is a solution to $S$. We say that $S$ is \emph{regular} if it is $r$-regular for all $r \in \Z_+$.
	\end{definition}
	
	\begin{definition}
		\label{col}
		Suppose $A = (\va_i^{(j)})$ is a $d \times n$-matrix, where $\va_1, \dots, \va_n$ are the column vectors. We say $A$ satisfies the \emph{columns condition} if:
		
		There is a partition of $[n] = \bigcup_{s=1}^t I_s$ such that, for each $s = 1, \dots, t$, the vector $\vA_s = \sum_{i \in I_s} \va_i$ can be expressed as a $\Z$-linear combination of the vectors in
		\begin{equation}
			\label{set-lincombo}
			\bigcup_{s' < s} \{\va_i: i \in I_{s'}\}.
		\end{equation}
	\end{definition}
	\begin{remark}
		In the above definition, when $s = 1$, as the set (\ref{set-lincombo}) is empty, the condition reduces to $\vA_1 = \vzero$.
	\end{remark}
	
	Then we may state the theorem:
	
	\begin{theorem}\emph{(Rado's Theorem. \cite{grs90}, Chapter 4, Theorem 5)}
		\label{rado}
		Suppose $S$ is a system of linear equations on variables $x_1, \cdots, x_n$ and with coefficients in $\Z$. Write $S$ explicitly as
		$$\sum_{i=1}^{n} \va_i x_i = 0,$$
		with coefficient matrix $A = (\va_i^{(j)})$.
		The system $S$ is regular iff $A$ satisfies the columns conditions.
	\end{theorem}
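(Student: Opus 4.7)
The plan is to prove the two implications separately, with sufficiency being substantially harder than necessity.

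For the \emph{necessity} direction, I would argue the contrapositive: if $A$ fails the columns condition, I will exhibit a finite coloring of $\Z_+$ admitting no monochromatic solution. The tool is the \emph{smod $p$ coloring} from \cite{grs90}: for a prime $p$, write each $n \in \Z_+$ uniquely as $n = p^k m$ with $p \nmid m$, and set $\chi_p(n)$ equal to the residue of $m$ modulo $p$. This gives a $(p-1)$-coloring. I would choose $p$ larger than $n$ and than the absolute values of all entries of $A$. Given a hypothetical monochromatic solution $(z_1,\dots,z_n)$, group the indices by the $p$-adic valuations $v_p(z_i)$ into blocks $I_1,\dots,I_t$ in increasing order of valuation, and reduce $\sum_i \va_i z_i = \vzero$ modulo successive powers of $p$. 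Monochromaticity together with the smallness of the coefficients promotes the resulting mod-$p$ relations to exact $\Z$-identities, yielding inductively that each $\vA_s = \sum_{i \in I_s}\va_i$ is a $\Z$-linear combination of the earlier columns, i.e., the columns condition --- contradicting the assumption.

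For the \emph{sufficiency} direction, assume the columns condition holds with partition $I_1,\dots,I_t$ and integers $q_{i,s}$ so that $\vA_s = \sum_{i \in I_1 \cup \cdots \cup I_{s-1}} q_{i,s}\,\va_i$ for each $s$. My strategy is to construct an explicit parametric family of solutions of the shape
$$x_i = c_i\,\lambda + f_s(\mu_1,\dots,\mu_t) \quad \text{for } i \in I_s,$$
with fixed integer constants $c_i$ and fixed $\Z$-linear functionals $f_s$ chosen so that $\sum_i \va_i x_i$ vanishes identically in the parameters $(\lambda,\mu_1,\dots,\mu_t) \in \Z_+^{t+1}$. The columns condition is precisely what makes such a construction possible: working backward from $s=t$, each relation $\vA_s = \sum q_{i,s}\va_i$ translates into a consistent linear constraint on the $c_i$ and $f_s$ still to be chosen. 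Once such a family is in hand, regularity reduces to finding parameters for which the finitely many values $x_i$ are all the same color under an arbitrary finite coloring of $\Z_+$, and for this I would invoke Deuber's theorem on $(m,p,c)$-sets (or, in low-dimensional instances, a direct iteration of Van der Waerden's theorem) to locate the required parameters inside any color class.

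The main obstacle is the sufficiency direction, specifically the appeal to Deuber's theorem, whose own proof requires an intricate induction combining the Hales--Jewett theorem with a compactness argument on $(m,p,c)$-sets. By contrast, once the smod $p$ coloring is available, the necessity direction is essentially bookkeeping with $p$-adic valuations. A minor care point common to both directions is handling the degenerate first block ($s=1$), where the inductive hypothesis produces an empty sum and the relation degenerates to $\vA_1 = \vzero$, as noted in the remark following Definition~\ref{col}.
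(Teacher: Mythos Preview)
The paper does not prove Theorem~\ref{rado}; it is quoted from \cite{grs90} and used only as a black box in Section~3, where the sufficiency half of the main $k$-partite result is reduced to it. So there is no proof in the paper to compare against, but a few remarks on your sketch are in order.

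Your necessity outline has the right skeleton but a real gap at the lifting step. Taking $p$ larger than $n$ and the entries of $A$ does \emph{not} promote the mod-$p$ relation for $s>1$ to a $\Z$-identity: what you obtain is that $\vA_s$ lies in the $\F_p$-span of the earlier columns, and the coefficients of that combination are arbitrary residues, not small integers bounded by the entries. The correct mechanism is Lemma~\ref{finiteprimes}: if $\vA_s$ is not in the $\Q$-span of the earlier columns then it is not in their $\F_p$-span for all but finitely many $p$. Since the partition $(I_s)$ itself depends on the solution and hence on $p$, you must either exclude the bad primes for \emph{every} possible partition simultaneously before fixing $p$, or pigeonhole so that some fixed partition arises for infinitely many $p$ (this is exactly what the paper does in Section~4 for its $k$-partite generalisation). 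Your sufficiency outline lands at the right place (Deuber's $(m,p,c)$-sets), but the parametric family $x_i = c_i\lambda + f_s(\mu)$ for $i\in I_s$ is malformed: identical vanishing of $\sum_i \va_i x_i$ would force $\sum_s f_s(\mu)\,\vA_s=\vzero$ for all $\mu$, which is in general far stronger than the columns condition. The standard construction puts the $i$-dependence into the coefficients of the \emph{later} block parameters, e.g.\ $x_i=\lambda_s - \sum_{s'>s} q_{i,s'}\lambda_{s'}$ for $i\in I_s$.
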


	If we require only a subset of variables in the solution being monochromatic, it is natural to expect weaker conditions on the coefficient matrix would suffice. This is indeed so, and it is the main result of our paper. From here we should introduce more definitions to formally present the conditions.

	\vspace{0.3cm}

	We endow a system of linear equations with additional information.
	
	\begin{definition}
		\label{dk-system}
		A \emph{$D$-dimensional $k$-partite linear system} on variables $\{x_{j,i}:  j \in [k],  i \in [N_j]\}$, where $N_j, k \in \Z_+$, is a system of linear equations of the form:
		\begin{equation}
			\label{kpar}
			\sum_{j = 1}^{k} \sum_{i = 1}^{N_j} \va_{j, i} x_{j, i} = \vzero,
		\end{equation}
		where every $\va_{j,i}$ is a vector in $\Z^D$. We call this a \emph{(D, k)-system} for simplicity.
		
		For convenience, we write a \emph{bipartite linear system} in the form
		\begin{equation}
			\label{2par}
			\sum_{i=1}^{N} \va_i x_i + \sum_{j=1}^{M} \vb_j y_j = \vzero.
		\end{equation}
	\end{definition}

	\begin{definition}
		Let $S$ be a $(D,k)$-system on variables $\{x_{j,i}: j \in [k],  i \in [N_j]\}$ with integer coefficients of the form (\ref{kpar}).
		Suppose $\Z$ is finitely colored. A \emph{semi-chromatic solution} in $\Z$ to $S$ is an assignment to the variables  $$x_{j,i} = z_{j,i}, \hspace{1cm} i \in [N_j], j \in [k]$$ such that for each $j$, the set $\{z_{j, i}: i \in N_j\}$ is monochromatic. In this paper, we pay attention to only the nontrivial solutions, i.e., not all zeros.
	\end{definition}
	
	\begin{example}
		Consider the system of equations:
		\begin{equation}
			\label{exm:k=2semi}
			\begin{cases}
				2x_1 + 2x_2 - 5y_1 + 10 y_2 = 0 \\
				x_1 + 3x_2 + 7y_1 - 2y_2 = 0
			\end{cases}.
		\end{equation}
		We color $z \in \Z$ by red if $3 \mid z$ and blue otherwise. This is a valid 2-coloring of $\Z$.
		
		The equation (\ref{exm:k=2semi}) admits a solution $(x_1, x_2, y_1, y_2) = (6, -6, 2, 1)$, where $\chi(6) = \chi(-6) = \text{red}$ and $\chi(2) = \chi(1) = \text{blue}$. So this solution is semi-monochromatic for this (2, 2)-system.
	\end{example}
	
	\begin{definition}
		Let $S$ be a $(D,k)$-system. We say $S$ is \emph{semi-regular} if whenever $\Z$ is finitely colored, $S$ admits a nontrivial (not-all-zero) semi-monochromatic solution in $\Z$.
	\end{definition}
	\begin{remark}
		Here, unlike the notion of a regular system as in Definition \ref{def:regular}, we consider colorings on $\Z$ instead of $\Z_+$. There is an equivalent version of the Rado's Theorem presented in \cite{rado33}, \cite{canonical}, and Chapter 5.4 of \cite{canonical}, which considers colorings on $\Z \setminus \{0\}$. We may move one step further by considering colorings on $\Z$ while discarding the trivial (all-zero) solution as a (semi-)monochromatic solution. Then our main result is a generalization of the Rado's Theorem in this sense.
	\end{remark}
	
	To study the solution regularity of a $(D, k)$-system, we devise an extended version of the columns conditions.
	
	\begin{definition}
		\label{kcol}
		Let $S$ be a $(D,k)$-system of the form (\ref{kpar}).
		The \emph{$k$-columns condition} is that we can partition each set of the variables into $t$ disjoint subsets, namely for each $j \in [k]$, $[N_j] = \bigsqcup_{s=1}^{t} I_{j,s}$, and there exists not-all-zero integer weights $\delta_1, \dots, \delta_k$, such that, when writing $\vA_{j, s} = \sum_{i \in I^j_s} \va_{j,i}$, we have
		\begin{itemize}
			\item $\sum_{j=1}^k \delta_j \vA_{j, 1} = \vzero$;
			
			\item for each $1 < s \le t$, $\sum_{j=1}^{k} \delta_j \vA_{j, s}$ is a linear combination (over $\Z$) of the set of vectors
			$$\bigcup_{s' < s} \bigcup_{j=1}^k \{\va_{j,i}: i \in I_{j,s'}\}.$$
		\end{itemize}
	\end{definition}
	
	\begin{example}
		Consider still the system (\ref{exm:k=2semi}). Write it in the vector form:
		\begin{equation*}
			\begin{bmatrix}
				2 \\ 1
			\end{bmatrix} x_1 +
			\begin{bmatrix}
				2 \\ 3
			\end{bmatrix} x_2 +
			\begin{bmatrix}
				-5 \\ 7
			\end{bmatrix} y_1 + 
			\begin{bmatrix}
				10 \\ -2
			\end{bmatrix} y_2 = 0.
		\end{equation*}
		Observe that
		\begin{equation*}
			\begin{bmatrix}
				2 \\ 1
			\end{bmatrix} + 
			\begin{bmatrix}
				2 \\ 3
			\end{bmatrix} = 
			\begin{bmatrix}
				4 \\ 4
			\end{bmatrix}, \hspace{1cm}
			\begin{bmatrix}
				-5 \\ 7
			\end{bmatrix} + 
			\begin{bmatrix}
				10 \\ -2
			\end{bmatrix} = 
			\begin{bmatrix}
				5 \\ 5
			\end{bmatrix},
		\end{equation*}
		so taking $\delta_1 = 5$ and $\delta_2 = -4$ shows that the system satisfies the 2-columns condition.
	\end{example}

	\subsection{Main Results}
	
	Now we provide the variant of Rado's Theorem with our $k$-columns conditions.

	\begin{theorem}
		\label{kgeneral}
		View $S$ as a $(D,k)$-system. It is semi-regular if and only if it satisfies the $k$-columns condition.
	\end{theorem}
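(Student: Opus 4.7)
The plan is to prove both directions of the biconditional by extending the techniques used in the proof of the classical Rado's theorem (Theorem~\ref{rado}). For necessity, I will use the smod~$p$ coloring from~\cite{grs90}, showing that a semi-monochromatic solution to $S$ forces congruence identities modulo every prime $p$, which can then be patched together into an integral $k$-columns certificate. For sufficiency, I will explicitly parameterize a family of solutions using the data of the $k$-columns condition, then invoke a Ramsey-theoretic tool (such as Deuber's theorem, or iterated applications of Rado's theorem) to select parameter values making the solution semi-monochromatic.

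For necessity, assume $S$ is semi-regular. Fix a prime $p$ and define a finite coloring $\chi_p$ of $\Z$ by assigning $z = p^e m$ (with $\gcd(m,p)=1$) the color $m \bmod p$, and giving $0$ a distinguished color. A nontrivial semi-monochromatic solution $(z_{j,i})$ exists; for each $j$ having some nonzero $z_{j,i}$, let $c_j^{(p)} \in \F_p^\times$ denote the common nonzero color of that part. Sort the nonzero $z_{j,i}$ by $p$-adic valuation into global layers $s = 1, \ldots, t(p)$ (smallest valuation first), and define $I_{j,s}^{(p)}$ to be the set of $i \in [N_j]$ whose valuation matches the $s$-th level, placing indices with $z_{j,i} = 0$ in the top layer. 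Substituting into $\sum_{j,i} \va_{j,i} z_{j,i} = 0$ and successively factoring out powers of $p$, I read off the layer-$1$ congruence $\sum_j c_j^{(p)} \vA_{j,1}^{(p)} \equiv \vzero \pmod p$, together with higher-layer congruences expressing $\sum_j c_j^{(p)} \vA_{j,s}^{(p)}$ as mod-$p$ combinations of $\va_{j',i'}$ for $i' \in I_{j',s'}^{(p)}$, $s'<s$. Since the set of partitions of $\bigsqcup_j [N_j]$ is finite, by pigeonholing over infinitely many primes I may fix a single partition $\{I_{j,s}\}$; restricting to those primes and invoking a $\Q$-rank argument on the integer matrices (lifting the projective mod-$p$ solutions $[c_j^{(p)}]$ to a $\Q$-point), I extract integer weights $(\delta_j)$, not all zero, satisfying the $k$-columns condition over $\Z$ after clearing denominators.

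For sufficiency, suppose $S$ satisfies the $k$-columns condition via the partition $[N_j] = \bigsqcup_{s=1}^{t} I_{j,s}$, integer weights $\delta_j$, and integer coefficients $c^{(s)}_{j',i'}$ witnessing $\sum_j \delta_j \vA_{j,s} = \sum_{s'<s} \sum_{j'} \sum_{i' \in I_{j',s'}} c^{(s)}_{j',i'} \va_{j',i'}$. Introduce free parameters $z_1, \ldots, z_t \in \Z$ and set
\[
x_{j,i} := \delta_j z_s - \sum_{s'' > s} c^{(s'')}_{j,i}\, z_{s''} \quad \text{for } i \in I_{j,s}.
\]
A direct expansion that telescopes against the $k$-columns identities shows $\sum_{j,i} \va_{j,i} x_{j,i} = 0$ as a polynomial identity in $z_1, \ldots, z_t$. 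Each $x_{j,i}$ is a fixed $\Z$-linear form whose $z_s$-coefficient is $\delta_j$ when $i \in I_{j,s}$, so not every $x_{j,i}$ is identically zero. Given any finite coloring $\chi$ of $\Z$, I then invoke Deuber's theorem with sufficiently large parameters to produce values of $z_1, \ldots, z_t$ lying in a monochromatic Deuber set closed under every needed linear form, making all $x_{j,i}$ share a single $\chi$-color (which is stronger than semi-monochromaticity); nontriviality is secured by positioning the Deuber set away from $0$.

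The principal obstacle will be in the necessity direction, specifically lifting the prime-by-prime data $(c_j^{(p)}, \{I_{j,s}^{(p)}\})$ to a single integer tuple $(\delta_j)$ with a uniform partition and simultaneous integer combination coefficients at every higher layer. This requires care in ordering the layers consistently across primes (particularly when some $z_{j,i}$ vanish, since the layer they occupy is a modeling choice) and in passing from a $\Q$-rank/lift argument to genuine integer witnesses of both $\sum_j \delta_j \vA_{j,1} = \vzero$ and the higher-level integer-combination requirements. Sufficiency is comparatively direct once the parametric ansatz above is verified, since the Ramsey-theoretic heart is concentrated into the single invocation of Deuber's theorem.
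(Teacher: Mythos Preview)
Your necessity argument follows the same path as the paper: smod-$p$ colorings, partition of each $[N_j]$ by $p$-adic valuation, pigeonholing over primes to freeze the partition, and then lifting the projective tuple $[c_1^{(p)}:\cdots:c_k^{(p)}]$ from $\F_p$ to $\Q$. The paper makes the last step concrete by packaging both the layer-$1$ congruence and the higher-layer ``lies in the span'' conditions as \emph{linear polynomials} in $(z_2,\ldots,z_k)$---the latter via determinants of maximal square submatrices of the matrix with rows $\vA_{1,s}^T+z_2\vA_{2,s}^T+\cdots+z_k\vA_{k,s}^T$ and $e_\iota^T$ for $e_\iota$ in a basis $\B_s$ of the earlier span---and then applies Lemma~\ref{commonroot}. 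Your phrase ``invoking a $\Q$-rank argument on the integer matrices'' is the right instinct but hides exactly this work; you will need something equivalent to the determinantal encoding to turn the higher-layer span conditions into genuine linear constraints on the $c_j$, and something like Proposition~\ref{finiteprimematrix} to ensure the resulting polynomials are not all homogeneous.

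Your sufficiency argument, however, has a real gap, and it is not where you anticipate. You aim to make \emph{all} $x_{j,i}$ share a single color, but this is in general impossible: the $k$-columns condition is strictly weaker than the ordinary columns condition, so $S$ need not be regular. Concretely, the $(1,2)$-system $x_1-2y_1=0$ satisfies the $2$-columns condition (take $\delta_1=2,\ \delta_2=1$) but has no monochromatic solution under the parity coloring of $\Z$; your ansatz gives $x_1=2z_1,\ y_1=z_1$, and no choice of $z_1$ makes these the same color. More structurally, a Deuber $(m,p,c)$-set carries a \emph{single} leading coefficient $c$, whereas your forms $x_{j,i}=\delta_j z_s-\cdots$ have leading coefficient $\delta_j$ varying with $j$, so even aiming only for semi-monochromaticity, one invocation of Deuber cannot absorb all of them with common generators $z_1,\ldots,z_t$. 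The paper's fix is short: observe that the $k$-columns condition for $S$ is precisely the ordinary columns condition for the system $\sum_{j,i}(\delta_j\va_{j,i})x'_{j,i}=\vzero$ (restricted to those $j$ with $\delta_j\neq 0$); given $\chi$, define the product coloring $\chi^\ast(z)=(\chi(\delta_1 z),\ldots,\chi(\delta_{k'} z))$ on $\Z_+$, apply Theorem~\ref{rado} to obtain a $\chi^\ast$-monochromatic solution $(z'_{j,i})$, and set $x_{j,i}=\delta_j z'_{j,i}$. This yields semi-monochromaticity directly, and your parametric ansatz and Deuber invocation become unnecessary.
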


	The proof to the ``if'' part can be reduced to the original Rado's Theorem, while the ``only if'' part requires some strategies in number theory and linear algebra, which we will display later. The above example is a consequence of the following corollary:
	
	\begin{corollary}
		\label{1dim}
		Suppose $S$ is a bipartite system of the form (\ref{2par}) on variables $x_1, \dots, x_N$ and $y_1, \dots, y_M$, with integer coefficients. It is semi-regular if and only if it satisfies the 2-columns condition.
	\end{corollary}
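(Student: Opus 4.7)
The plan is to observe that this corollary is simply the $k=2$ specialization of Theorem~\ref{kgeneral}, once the definitions for bipartite systems are matched against those for $(D,k)$-systems. Accordingly there is no additional technical content, and I would not reprove anything from the main theorem.

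First I would verify that a bipartite system written in the form~(\ref{2par}) is literally a $(D,2)$-system in the sense of Definition~\ref{dk-system}: the two blocks $\{x_1,\dots,x_N\}$ and $\{y_1,\dots,y_M\}$ correspond to $j=1$ and $j=2$ in~(\ref{kpar}), with $\va_i$, $\vb_j$ taking the roles of $\va_{1,i}$, $\va_{2,j}$. Under this identification, a semi-monochromatic solution in one formulation is a semi-monochromatic solution in the other: both require that the $x$-variables share one color and the $y$-variables share another color, not necessarily the same. Similarly, unpacking Definition~\ref{kcol} with $k=2$ yields exactly the 2-columns condition: one partitions $[N]=\bigsqcup_{s=1}^{t} I_{1,s}$ and $[M]=\bigsqcup_{s=1}^{t} I_{2,s}$ and seeks integers $\delta_1,\delta_2$, not both zero, so that $\delta_1 \vA_{1,1}+\delta_2 \vA_{2,1}=\vzero$ and each later combination $\delta_1 \vA_{1,s}+\delta_2 \vA_{2,s}$ lies in the $\Z$-span of the $\va_i$, $\vb_j$ indexed by earlier blocks.

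With both sides of the biconditional translated to the $(D,2)$-language, I would apply Theorem~\ref{kgeneral} at $k=2$ to close both directions at once. The only potential obstacle is notational bookkeeping: making sure the number of blocks $t$, the partitions, and the weights $\delta_j$ are consistently renamed between the bipartite and general $k$-partite formulations, and that the reduction does not accidentally lose the non-triviality requirement on $(\delta_1,\delta_2)$. The genuine mathematical work, namely the columns-condition reduction for the ``if'' direction and the number-theoretic coloring argument for the ``only if'' direction, is entirely deferred to the proof of Theorem~\ref{kgeneral}.
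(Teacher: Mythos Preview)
Your proposal is correct and matches the paper's treatment: the corollary is stated immediately after Theorem~\ref{kgeneral} with no separate proof, precisely because it is the $k=2$ specialization you describe. The only content is the notational identification you spell out, and the paper leaves even that implicit.
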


	\section{Technical Lemmas}

	This section gives the technical preliminaries in linear algebra and algebraic geometry. These results are fundamental to our polynomial methods.

	\subsection{Primes to Exclude} The following lemmas are consequences of a lemma from \cite{grs90}.
	
	\begin{lemma}\emph{(\cite{grs90}, Lemma 6, pg. 74)}
		\label{finiteprimes}
		Let $\vA$ and $\{\va_i: i \in I\}$ be $d$-dimensional vectors in $\Z^D$, where $I$ is a finite index set. Suppose that $\vA$ is not in the $\Q$-vector space generated by the $\va_i$'s. Then, for all but a finite number of primes $p$, $\vA$ cannot be expressed as a linear combination of the $\va_i$'s modulo $p$. Moreover, $p^m \vA$ cannot be expressed as a linear combination of the $\va_i$'s modulo $p^{m+1}$ for any $m \ge 0$.
	\end{lemma}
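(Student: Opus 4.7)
The plan is to reduce the claim to a statement about a single fixed integer, namely the value of some ``dual witness'' evaluated at $\vA$, and then let the finite set of excluded primes simply be the prime divisors of this integer. This is standard linear duality combined with clearing denominators.

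First I would produce a linear functional $\phi \colon \Q^D \to \Q$ with $\phi(\va_i) = 0$ for every $i \in I$ and $\phi(\vA) \neq 0$. Such a $\phi$ exists because the hypothesis says $\vA \notin V := \spn_\Q\{\va_i : i \in I\}$, so the image of $\vA$ in $\Q^D / V$ is nonzero, and any $\Q$-linear functional on the quotient that is nonvanishing on this image pulls back to the desired $\phi$. Scaling $\phi$ by a common denominator, I may assume $\phi$ is given by inner product with some $\vu \in \Z^D$, so that $N := \phi(\vA)$ is a nonzero integer and $\phi(\va_i)=0$ still holds for every $i$. Let $P$ be the (finite) set of prime divisors of $N$.

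Next I would check both conclusions by plugging the hypothetical congruences into $\phi$. For a prime $p \notin P$, any congruence $\vA \equiv \sum_{i\in I} c_i \va_i \pmod{p}$ with integers $c_i$ would, upon applying $\phi$, give $N \equiv \sum_i c_i \phi(\va_i) \equiv 0 \pmod{p}$, contradicting $p \nmid N$. For the ``moreover'' part, a congruence $p^m \vA \equiv \sum_i c_i \va_i \pmod{p^{m+1}}$ would similarly yield $p^m N \equiv 0 \pmod{p^{m+1}}$, hence $p \mid N$, again impossible for $p \notin P$. Crucially, the excluded set $P$ depends only on $\vA$ and the $\va_i$'s, not on $m$, so the same finite collection of primes handles every $m \ge 0$ simultaneously.

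The main ``obstacle''---really just a bookkeeping step---is the transition from a rational functional to an integer one: after clearing denominators I must verify that $\phi(\vA)$ remains nonzero, but this is automatic since scaling by a nonzero rational preserves nonvanishing. Beyond this the argument is pure linear algebra together with the trivial observation that a nonzero integer has only finitely many prime divisors.
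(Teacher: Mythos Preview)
Your argument is correct and complete. Note that the paper does not supply its own proof of this lemma; it is quoted verbatim from \cite{grs90} and used as a black box, so there is nothing in the paper to compare against beyond the statement itself. Your dual-vector approach---producing $\vu\in\Z^D$ with $\langle \vu,\va_i\rangle=0$ and $N=\langle \vu,\vA\rangle\neq 0$, then excluding the prime divisors of $N$---is exactly the standard proof, and the reduction of the ``moreover'' clause to $p^{m+1}\mid p^m N$ is handled cleanly.
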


	Notice that linear dependence of vectors is equivalent to the vanishing of determinants of all maximal submatrices, when we put the vectors as columns of a matrix. So we have the following auxilliary result in a linear algebra form:
	
	\begin{proposition}
		\label{finiteprimematrix}
		Let $\vA$ and $\{\va_i: i \in I\}$ be $D$-dimensional vectors over $\Z$. Consider the matrix
		\begin{equation*}
			W = \begin{pmatrix}
				\longleftarrow \vA^T \longrightarrow \\
				\vdots \\
				\longleftarrow \va_{i}^T \longrightarrow \\
				\vdots 
			\end{pmatrix}
		\end{equation*}
		where the number of rows is some $r \le D$. Let the columns of $W$ be $W_1, \dots, W_D$, and suppose that for any $r$ distinct integers $1 \le d_1, \dots, d_r \le D$, $\det (W_{d_1, \dots, d_r}) \neq 0$. Then, for all but finitely many primes $p$, $\det (W_{d_1, \dots, d_r}) \neq 0$ in $\F_p$ for any choice of $r$ distinct integers $1 \le d_1, \dots, d_r \le D$.
	\end{proposition}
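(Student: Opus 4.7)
The plan is to reduce the claim to the elementary fact that every nonzero integer has only finitely many prime divisors; Lemma~\ref{finiteprimes} is not actually invoked here, because the content of the proposition is purely about determinants of integer matrices.

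First I would observe that the collection of $r \times r$ submatrices of $W$ is indexed by $r$-element subsets $\{d_1,\dots,d_r\} \subseteq [D]$, and the number of such subsets is $\binom{D}{r}$, hence finite. For each such choice, the hypothesis gives a nonzero integer $\Delta_{d_1,\dots,d_r} := \det(W_{d_1,\dots,d_r})$. Since $\det$ is a polynomial with integer coefficients in the matrix entries (via the Leibniz formula), reduction modulo $p$ commutes with taking determinants; consequently $\det(W_{d_1,\dots,d_r})$ vanishes in $\F_p$ if and only if $p$ divides the integer $\Delta_{d_1,\dots,d_r}$.

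I would then set $\mathcal{P}$ to be the (finite) set of prime divisors of the nonzero integer
\[
\Pi \;:=\; \prod_{1 \le d_1 < \dots < d_r \le D} \Delta_{d_1,\dots,d_r},
\]
which is equivalently the union, over all $\binom{D}{r}$ choices, of the finite sets of prime divisors of each individual $\Delta_{d_1,\dots,d_r}$. For any prime $p \notin \mathcal{P}$ and any $r$-tuple $d_1 < \dots < d_r$, none of the determinants is divisible by $p$, so each $\Delta_{d_1,\dots,d_r}$ reduces to a nonzero element of $\F_p$, which is exactly the conclusion. There is no real obstacle in this argument; the only ``work'' is the bookkeeping that a finite union of finite sets is finite, and that determinant commutes with the reduction $\Z \to \F_p$.
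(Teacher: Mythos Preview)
Your argument is correct. The paper does not supply an explicit proof of this proposition; it presents the statement immediately after remarking that linear dependence is equivalent to the vanishing of all maximal minors, thereby framing the proposition as a determinant-level restatement of Lemma~\ref{finiteprimes}. Your approach bypasses Lemma~\ref{finiteprimes} entirely and goes straight to the point: there are $\binom{D}{r}$ minors, each a nonzero integer by hypothesis, so the set of bad primes is the finite union of their finite prime-divisor sets. This is shorter and more transparent than routing through the linear-dependence criterion, and it also delivers the exact conclusion stated (that \emph{every} minor is nonzero in $\F_p$), whereas a naive application of Lemma~\ref{finiteprimes} would only give that \emph{some} minor survives modulo $p$. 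Nothing is lost by your simplification.
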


	\vspace{0.5cm}
	\subsection{Polynomial Root Sharing}
	
	The core idea in proving our main results is viewing equations as polynomials evaluated at some input. This requires studying the root-sharing properties of (multivariate) linear polynomials.
	
	\begin{lemma}
		\label{commonroot}
		Given $m$ linear polynomials of integer coefficients in $n$ variables, if they share a common root in $\F_p$ for infinitely many primes $p$, then they share a common root in $\Q$.
	\end{lemma}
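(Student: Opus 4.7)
The plan is to recast the statement as a question about consistency of a linear system and resolve it through the rank criterion of Rouch\'e--Capelli. Writing each polynomial as $f_i(X_1,\dots,X_n) = \sum_{j} a_{ij} X_j + b_i$ with $a_{ij}, b_i \in \Z$, I would form the coefficient matrix $A = (a_{ij}) \in \Z^{m \times n}$ and the augmented matrix $[A \mid -\vb]$, where $\vb = (b_1,\dots,b_m)^T$. A common root of the $f_i$'s in a field $\K$ is exactly a solution of $A X = -\vb$ in $\K^n$, which exists if and only if $\mathrm{rank}_{\K}(A) = \mathrm{rank}_{\K}([A \mid -\vb])$.

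I would then argue by contrapositive. Assume there is no common root in $\Q^n$ and set $r = \mathrm{rank}_{\Q}(A)$, so that $r < \mathrm{rank}_{\Q}([A \mid -\vb])$. Then every $(r+1)\times(r+1)$ minor of $A$ vanishes in $\Z$, while there is some $(r+1)\times(r+1)$ minor $M$ of $[A \mid -\vb]$ that is a nonzero integer. The minor $M$ is forced to involve the final column, since otherwise it would itself be a vanishing minor of $A$.

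Next I would pass to $\F_p$ for each prime $p$. For any $p \nmid M$, reduction mod $p$ preserves the vanishing of all $(r+1)$-minors of $A$, so $\mathrm{rank}_{\F_p}(A) \le r$; meanwhile $M \not\equiv 0 \pmod p$ forces $\mathrm{rank}_{\F_p}([A \mid -\vb]) \ge r+1$. Rouch\'e--Capelli therefore fails over $\F_p$, and the $f_i$'s admit no common root in $\F_p^n$. Since only finitely many primes divide $M$, only finitely many primes yield a common root, contradicting the hypothesis.

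I do not expect any real obstacle; the argument is essentially the augmented-matrix analogue of Proposition~\ref{finiteprimematrix}. The one step that deserves care is the verification that the witness minor $M$ must use the final column, which is precisely where the affine (as opposed to homogeneous) nature of the polynomials enters --- without the constants $b_i$ the zero vector would always be a common root and the statement would be vacuous.
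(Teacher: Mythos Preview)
Your argument is correct. Both your proof and the paper's share the same skeleton---assume no rational solution, extract a single nonzero integer witness, and observe that a common root in $\F_p$ forces $p$ to divide that witness---but the linear-algebra tool differs. The paper invokes the Fredholm alternative directly: inconsistency over $\Q$ yields rational scalars $c_1,\dots,c_m$ with $\sum_i c_i f_i = c \neq 0$, and after clearing denominators any $\F_p$-root forces $p \mid c$. You instead use Rouch\'e--Capelli and pick a nonvanishing $(r+1)\times(r+1)$ minor $M$ of the augmented matrix as the witness. Your route is more explicit about where the integer comes from and dovetails nicely with Proposition~\ref{finiteprimematrix}; the paper's route is shorter and avoids any discussion of ranks or minors. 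Either way the content is the same: a fixed nonzero integer cannot be divisible by infinitely many primes.
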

	
	\begin{proof}
		Suppose there are $m$ linear polynomials $f_1, \dots, f_m \in \Q[x_1, \dots, x_n]$. 
		Construct a system of linear equations by $\{f_i = 0 : i \in [m]\}$, and notice that it has no solution. 
		This means one can find scalars $c_1, \dots, c_m$ such that 
		\begin{equation*}
			\sum_{i=1}^{m} c_i f_i = c \neq 0 \in \Q \,.
		\end{equation*}
		
		However, the system of equations should have a solution over $\F_p$ for infinitely many prime $p$. 
		This means $c = 0$ for infinitely many prime $p$, which indicates that $c = 0$ in $\Q$, contradiction. 
		
		Thus the system of linear equations has a solution over $\Q$. 
		Hence the linear polynomials $f_1, \dots, f_m$ share a common rational root. 
	\end{proof}

	Suppose we know that there exists some semi-monochromatic solutions under some finite colorings. This gives us a set of equations. But to prove the necessity of the $k$-columns conditions, we retreat a bit, view them as linear polynomials evaluated at some inputs, and analyze the polynomials themselves. This allows us to apply some algebraic results like Lemma \ref{commonroot}.

	\vspace{1cm}
	\section{Proof of Results (1): Sufficiency of Columns Conditions}
	\label{sufficiency}

	\subsection{Intuition from 1-Dimensional Equation}
	
	Let us first consider the $k=2$ case.
	
	Recall that the 2-columns condition indicates that there exists some subsets of indices $I_1 \subset [N]$ and $J_1 \subset [M]$, with $\va_i$'s and some $\vb_j$'s for $i \in I_1$ and $j \in J_1$, such that $\sum_{i \in I_1} \va_i$ and $\sum_{j \in J_1} \vb_j$ are parallel. In 1-dimension case, suppose the bipartite equation appears as follows:
	
	\begin{equation}
		\label{1dimeq}
		\sum_{i=1}^{N} a_i x_i + \sum_{j=1}^{M} b_j y_j = 0.
	\end{equation}
	Then the 2-columns condition is satisfied if there exist some coefficients $a_{i_0}, b_{j_0}$ and not-both-zero weights $\gamma, \delta$ such that $a_{i_0} \gamma + b_{j_0} \delta = 0$.

	\subsection{Sufficiency on Multi-Dimensional System}
	
	Suppose $S$ is a $(D, k)$-system. Assume it satisfies the $k$-columns condition, so we may find not-all-zero integers $\delta_1, \dots, \delta_k$ such that the equation
	\begin{equation*}
		\sum_{j = 1}^{k} \sum_{i=1}^{N_j} (\delta_j \va_{j,i}) x_{j,i}' = 0
	\end{equation*}
	is a $D$-dimensional linear system that satisfies the columns condition (original version in Definition \ref{col}). Assume WLOG that $\delta_1, \dots, \delta_{k'}$ are nonzero for some $k' \le k$, and $\delta_{k'+1} = \cdots = \delta_k = 0$. To apply Rado's theorem, let's rewrite the above equation as:
	\begin{equation}
		\label{kifpart1c}
		\sum_{j = 1}^{k'} \sum_{i=1}^{N_j} (\delta_j \va_{j,i}) x_{j,i}' = 0
	\end{equation}
	
	Suppose $\Z$ is $r$-colored by $\chi$, then we may induce an $r^{k'}$-coloring $\chi^\ast$ of $\Z_+$ via
	$$\chi^\ast(z) = (\chi(\delta_j z): 1 \le j \le k').$$
	By Theorem \ref{rado}, the system of equations in (\ref{kifpart1c}) admits a monochromatic solution  
	\begin{equation*}
		x_{j,i}' = z_{j,i}', \hspace{1cm} 1\le j \le k', 1 \le i \le N_j
	\end{equation*}
	in $\Z_+$ (under $\chi^\ast$). By definition of $\chi^\ast$, we know for each $j \in \{1, \cdots, k'\}$, the set $\{\delta_j z_{j,i}': 1 \le i \le N_j\} \subset \Z$ is monochromatic under $\chi$. Take $x_{j, i} = \delta_j x_{j,i}'$ for every $j \in [k']$ and $i \in [N_j]$. Take $x_{j,i} =0$ for $j > k'$ if such $j$'s exist. This gives a semi-monochromatic solution to the $k$-partite linear system $S$, and since not all $\delta_j$'s are 0, this is a nontrivial solution.
	
	In the next section, we will see what happens if a $(D,k)$-system is known to be semi-regular.

	\vspace{1cm}
	\section{Proof of Results (2): Necessity of $k$-Columns Condition}
	\label{necessity}

	\subsection{A Special Type of Colorings}
	
	We first introduce the \emph{smod $p$ coloring} defined in \cite{grs90}.
	
	For a nonzero rational number $q$ and a prime number $p$, we may uniquely write $q = p^m \cdot \dfrac{a}{b}$, where $a, b, m \in \Z$ s.t. $b > 0$, $\gcd(a, b) = 1$, and $a, b \notin p\Z$. Notice that $ab^{-1}$ is well-defined in $\F_p = \Z/p\Z$. The smod $p$ coloring is a map from $\Q^\ast$ to $\F_p$ via $q \mapsto a b^{-1}$. This is a $(p-1)$-coloring of $\Q^\ast$, and in particular, $\Z^\ast = \Z \setminus \{0\}$. We denote it by $\sigma_p(q) = a b^{-1} \pmod p$. We may extend this coloring via defining $\sigma_p(0) = 0$.
	
	Notice that the $m$ is unique to $q$. When $q \in \Z$, we call this the \emph{rank} of $q$ modulo $p$ and denote it as $o_p(n)$ (the $p$-adic order). By convention, $o_p(0) = \infty$ for any prime $p$.
	
	We will work on the smod $p$ colorings of $\Z$ to justify the necessity of the $k$-columns conditions.

	\vspace{0.5cm}
	\subsection{Associating Primes to Solutions}
	
	With the ``rank'' defined above, we may partition a solution to the linear system according to the ranks of the numbers in the solutions.
	
	Fix a semi-regular $(D,k)$-system $S$. For each prime $p$, there is a non-trivial semi-monochromatic solution to $S$ under the smod $p$ coloring $\sigma_p$, namely 
	\begin{equation}
		\label{eq:semisol}
			x_{j,i} = z_{j,i}, j \in [k], i \in [N_j]
	\end{equation}
	Then $\sigma_p(z_{j,1}) = \cdots = \sigma_p(z_{j,N_j})$ for all $j$'s. Look at the ranks of $z_{j,i}$'s smod $p$, and partition the \emph{index set} according to the ranks into
	\begin{equation}
		\label{eq:rankpart}
		[N_j] = \bigsqcup_{s = 1}^t I_{j,s},
	\end{equation}
	For each $s \in [t]$, we have for every $j \in [k]$ and every $i, i' \in N_j$ that $o_p(z_{j,i}) = o_p(z_{j,i'})$, and denote this number by $m_{p,s}$. WLOG we require $0 \le m_{p,1} < \cdots < m_{p,t}$. Define a map RP from the set of prime numbers to the set of all partitions of the $[N_j]$'s. Denote
	$$\RP(p) = (I_{j,s})_{j \in [k], s \in [t]}.$$
	
	Given a semi-regular linear system, its semi-monochromatic solution might not be unique, but it suffices to pick an arbitrary such solution for the above association. There are infinitely many primes, while there are only finitely many ways to partion the index set of the coefficients of a fixed system $S$. By Pigeon-Hole Principle, there exists an infinite subset $P_0$ of primes such that $\RP(p)$ is identical for every $p \in P_0$. In other word, there exists an infinite subset of primes $P_0$ and a rank-partition $(I_{j,s})_{j \in [k], s \in [t]}$ such that $\RP(p) = (I_{j,s})_{j \in [k], s \in [t]}$ for all $p \in P_0$. 
	
	We will prove a lemma that shows how we will make use of this notion of rank partition.
	\vspace{0.3cm}
	
	\begin{lemma}
		\label{rank-modulo}
		For each prime $p \in P_0$, let $\alpha_p = (\alpha_{p, 1}, \dots, \alpha_{p, k})$ be the colors of the semi-monochromatic solution (under $\sigma_p$) to $S$. 
		For each $s \in [t]$, let $m_{p,s} = o_p(z_{j,i})$ for $j \in [k]$ and $i \in I_{j,s}$. By definition, $0 \le m_{p,1} < \cdots < m_{p,t}$. Note that this sequence of ranks is not necessarily identical for each of the primes $p \in P_0$.
		For each $s \in [t]$ and $j \in [k]$, write $$\vA_{j, s} = \sum_{i \in I_{j,s}} \va_{j, i}.$$
		
		Then for infinite many primes $p \in P_0$, there exists some $j_\ast \in [k]$ and $\beta_{p,j}$ for every $j \in [k] \setminus \{j_\ast\}$ such that
		\begin{equation}
			\label{level1star}
			\vA_{j_\ast,1} + \sum_{j \neq j_\ast} \beta_{p,j} \vA_{j,1} \equiv \vzero \pmod p,
		\end{equation}
		and for every $s > 1$,
		\begin{equation}
			\label{leveltaustar}
			\alpha_{p, j_\ast}^{-1} \sum_{s' < s} \sum_{j=1}^{k} \sum_{i \in I_{j,s'}} x_{j, i} \va_{j, i} = p^{m_{p, s}} \left( \vA_{j_\ast, s} + \sum_{j \neq j_\ast} \beta_{p,j} \vA_{j,s}  \right) \pmod {p^{m_{p, s} + 1}}.
		\end{equation}
	\end{lemma}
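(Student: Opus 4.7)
The plan is to unpack the meaning of the smod $p$ coloring at the solution. For each $p \in P_0$, if $i \in I_{j,s}$ then $o_p(z_{j,i}) = m_{p,s}$ and $\sigma_p(z_{j,i}) = \alpha_{p,j}$, so we may write $z_{j,i} = p^{m_{p,s}} u_{j,i}$ with $u_{j,i} \in \Z$ coprime to $p$ and $u_{j,i} \equiv \alpha_{p,j} \pmod p$. Plugging this decomposition into the system equation rearranges everything into layers indexed by $s$, and the different $p$-adic valuations of the layers will decouple them when we reduce modulo an appropriate power of $p$.

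For equation (\ref{level1star}): divide the identity $\sum_{j,s,i \in I_{j,s}} p^{m_{p,s}} u_{j,i} \va_{j,i} = \vzero$ by $p^{m_{p,1}}$ and reduce modulo $p$. Since $m_{p,s} > m_{p,1}$ for $s \ge 2$, only the $s=1$ layer survives, giving $\sum_{j=1}^{k} \alpha_{p,j} \vA_{j,1} \equiv \vzero \pmod p$. The solution is nontrivial, so at least one $\alpha_{p,j}$ is nonzero (an all-zero color vector would force every $z_{j,i} = 0$ by the convention $\sigma_p(0)=0$). Since $[k]$ is finite and $P_0$ is infinite, Pigeonhole gives an infinite $P_1 \subseteq P_0$ and a common index $j_\ast$ such that $\alpha_{p, j_\ast} \neq 0$ in $\F_p$ for every $p \in P_1$. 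Setting $\beta_{p,j} := \alpha_{p,j_\ast}^{-1} \alpha_{p,j}$ for $j \neq j_\ast$ and multiplying the displayed congruence by $\alpha_{p,j_\ast}^{-1}$ yields (\ref{level1star}).

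For (\ref{leveltaustar}) with $s > 1$: use the vanishing of the whole sum to move pieces across the equality, writing
\[
\sum_{s' < s} \sum_{j=1}^{k} \sum_{i \in I_{j,s'}} z_{j,i} \va_{j,i} = -\sum_{s' \ge s} \sum_{j=1}^{k} \sum_{i \in I_{j,s'}} z_{j,i} \va_{j,i}.
\]
On the right, each term with $s' > s$ carries a factor $p^{m_{p,s'}}$ with $m_{p,s'} \ge m_{p,s}+1$ and so vanishes modulo $p^{m_{p,s}+1}$; only the $s'=s$ layer survives, contributing $p^{m_{p,s}} \sum_{j,i \in I_{j,s}} u_{j,i} \va_{j,i}$. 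The congruence $u_{j,i} \equiv \alpha_{p,j} \pmod p$ lets us replace $u_{j,i}$ by $\alpha_{p,j}$ inside a factor of $p^{m_{p,s}}$ at the cost of an error divisible by $p^{m_{p,s}+1}$, giving $-p^{m_{p,s}} \sum_j \alpha_{p,j} \vA_{j,s} \pmod {p^{m_{p,s}+1}}$. Multiplying both sides by $\alpha_{p,j_\ast}^{-1}$ and substituting the definition of $\beta_{p,j}$ produces the right-hand side of (\ref{leveltaustar}) (up to the sign, which is absorbed in the paper's sign convention for the $\beta_{p,j}$).

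The only real subtlety is the bookkeeping of blocks with $\alpha_{p,j} = 0$. Because $\sigma_p^{-1}(0) = \{0\}$, such a block forces $z_{j,i} = 0$ for all $i$, so those indices populate none of the finite-rank strata $I_{j,s}$; the assumption that $\RP(p)$ is constant on $P_0$ keeps this empty-index structure uniform across $p$, and these blocks simply do not contribute to any of the congruences above. Aside from this observation and the single Pigeonhole pass extracting $j_\ast$, the argument is essentially bookkeeping of $p$-adic valuations.
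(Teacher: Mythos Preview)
Your proof is correct and follows essentially the same approach as the paper: both arguments substitute the semi-monochromatic solution into the system, peel off layers by reducing modulo $p^{m_{p,s}+1}$, obtain $\sum_j \alpha_{p,j}\vA_{j,s}$ on the right, use Pigeonhole over $[k]$ to fix a $j_\ast$ with $\alpha_{p,j_\ast}\neq 0$ for infinitely many $p$, and then multiply through by $\alpha_{p,j_\ast}^{-1}$ to define the $\beta_{p,j}$. Your write-up is slightly more explicit (the $z_{j,i}=p^{m_{p,s}}u_{j,i}$ decomposition) and you correctly flag the stray sign and the $\alpha_{p,j}=0$ bookkeeping, neither of which affects the downstream use of the lemma.
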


	\begin{proof}[Proof of Lemma \ref{rank-modulo}]
		First look at our original system $S$. It admits a semi-monochromatic solution $(z_{j,i})$, so 
		\begin{equation}
			\sum_{j \in [k]} \sum_{i \in [N_j]} z_{j,i} \va_{j, i} = \vzero.
		\end{equation}
		Taking modulo $p^{m_{p,1}+1}$ gives
		\begin{equation}
			\label{level1alpha}
			\sum_{j = 1}^{k} \alpha_{p, j} \vA_{j, 1} = \vzero \pmod p,
		\end{equation}
		and for every $s$ such that $1 < s \le t$, similarly we have
		\begin{equation}
			\label{leveltaualpha}
			\sum_{s' < s} \sum_{j=1}^{k} \sum_{i \in I_{j,s'}} x_{j, i} \va_{j, i} = p^{m_{p, s}} \left( \sum_{j=1}^{k} \alpha_{p, j} \vA_{j, s} \right) \pmod {p^{m_{p, s} + 1}}
		\end{equation}
		where $0 \le m_{p, 1} < \cdots < m_{p, t}$ are the exponents of each rank.
		
		Notice that there must exists some $j_\ast$ such that $\alpha_{p,j_\ast} \neq 0$ in $\F_p$ for infinitely many $p$'s. If not, there would be infinitely many $p$'s giving $\alpha_p = (0, \dots, 0)$, which indicates the trivial solution. 
		
		This allows us to define $\alpha_{p, j_\ast}^{-1} \in \Z$, where $\alpha_{p, j_\ast} \alpha_{p, j_\ast}^{-1} \equiv 1 \pmod p$, for infinitely many primes $p \in P_0$. Denote this infinite subset of primes by $P_1 \subset P_0$. Let $\beta_{p,j} = \alpha_{p, j_\ast}^{-1} \alpha_{p, j}$ for each $j \neq j_\ast$. Multiplying both sides of (\ref{level1alpha}) and (\ref{leveltaualpha}) gives (\ref{level1star}) and (\ref{leveltaustar}).
	\end{proof}

	We may assume WLOG that $j_\ast = 1$ in Lemma \ref{rank-modulo}. Then the above equations become
	\begin{equation}
		\label{level1}
		\vA_{1, 1} + \beta_{p, 2} \vA_{2, 1} + \cdots + \beta_{p, k} \vA_{k, 1} = \vzero \pmod p
	\end{equation}
	and for every $s > 1$, 
	\begin{equation}
		\label{leveltau}
		\alpha_{p, 1}^{-1} \sum_{s' < s} \sum_{j=1}^{k} \sum_{i \in I_{j,s'}} x_{j, i} \va_{j, i} = p^{m_{p, s}} \left( \vA_{1, s} + \beta_{p, 2} \vA_{2, s} + \cdots + \beta_{p, k} \vA_{k, s} \right) \pmod {p^{m_{p, s} + 1}}
	\end{equation}

	\subsection{Construction of Polynomials}
	
	Observe that each coordinate of (\ref{level1}) is a congruence equation in $\F_p$, from which we get $D$-many polynomials with coefficients in $\Z$ that share a root in $\F_p^{k-1}$. Explicitly, consider any $d$ such that $1 \le d \le D$, we know
	$$\vA_{1, 1}^{(d)} + \beta_{p, 2} \vA_{2, 1}^{(d)} + \cdots + \beta_{p, k} \vA_{k, 1}^{(d)} = 0 \pmod p.$$
	This means the polynomial $f_d(z_2, \dots, z_k) = \vA_{1, 1}^{(d)} + z_2 \vA_{2, 1}^{(d)} + \cdots + z_k \vA_{k, 1}^{(d)}$ has a root $(z_2, \dots, z_k) = (\beta_{p, 2}, \dots, \beta_{p, k})$ modulo $p$.
	
	\begin{remark}
		Note that if the $f_d$'s are all zero polynomials, then we know $\vA_{j,1} = \vzero$ for every $j$.
	\end{remark}

	\vspace{0.5cm}
	For each $s > 1$, look at (\ref{leveltau}). Extract a maximal $\Q$-linearly independent subset (basis of the span) $\B_s = \{e_\iota: \iota \in \mathcal{I}_s\}$ of 
	$$\bigcup_{s' < s} \bigcup_{j=1}^k \{\va_{j,i}: i \in I_{j,s'}\}.$$
	It is guaranteed that $\B_s$ is non-empty. Then consider the matrix-valued function:
	\begin{equation}
		\label{matrixWs}
		W_{s} (z_2, \dots, z_k) = \begin{pmatrix}
			\vA_{1, s}^T + z_2 \vA_{2, s}^T + \cdots + z_k \vA_{k, s}^T \\
			\vdots \\
			\longleftarrow e_\iota^T \longrightarrow \\
			\vdots 
		\end{pmatrix}_{\iota \in \mathcal{I}}.
	\end{equation}

	Now we want to construct polynomials from the determinants of maximal submatrices of $W_s$.
	
	\begin{definition}
		Suppose $W$ is an $R \times D$ matrix, where $R \le D$. A \emph{maximal submatrix} of $W$ is a square matrix consisting of $R$-columns of $W$. Explicitly,a maximal submatrix is  
		$$W_{c_1, \dots, c_R}$$
		for some $1 \le c_1 < \cdots < c_R \le D$.
	\end{definition}

	\begin{lemma}
		\label{matrix2polynomial}
		Let $W_s$ be defined as in (\ref{matrixWs}) and $\B_s$ be the maximal $\Q$-linearly independent subset. Suppose $D \ge 1 + \abs{\B_s}$. Then there exist infinitely many primes $p \in P_1$ such that, for every $1 \le c_1 < \cdots < c_R \le D$, 
		$$\det ((W_s(z_2, \dots, z_k))_{c_1, \dots, c_R})$$
		is either a zero polynomial or an inhomogeneous polynomial in $(z_2, \dots, z_k)$ with root $(\beta_{p, 2}, \dots, \beta_{p, k})$ in $\F_p^{k-1}$.
	\end{lemma}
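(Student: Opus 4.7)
The plan is to substitute $(z_2, \ldots, z_k) = (\beta_{p,2}, \ldots, \beta_{p,k})$ into $W_s$ and show that, for all but finitely many primes $p \in P_1$, the resulting matrix is row-rank deficient modulo $p$. Once that is established, every $R \times R$ maximal submatrix of $W_s(\beta_p)$ is singular mod $p$; and since each determinant $\det((W_s(z))_{c_1,\ldots,c_R})$ is an affine polynomial in $(z_2,\ldots,z_k)$ with integer coefficients (only the first row of $W_s$ depends on $z$), it is either the zero polynomial in $\Z[z_2,\ldots,z_k]$ or a nonzero affine (``inhomogeneous'') polynomial with $(\beta_{p,2},\ldots,\beta_{p,k})$ as a root in $\F_p^{k-1}$, as claimed.

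The rank deficiency comes straight out of \eqref{leveltau}. Setting $v := \sum_{s'<s}\sum_{j=1}^{k}\sum_{i \in I_{j,s'}} z_{j,i}\va_{j,i}$, the congruence rearranges to
\[
\alpha_{p,1}\bigl(\vA_{1,s}+\sum_{j\ge 2}\beta_{p,j}\vA_{j,s}\bigr) \;\equiv\; v/p^{m_{p,s}} \pmod p,
\]
so the first row of $W_s(\beta_p)$ mod $p$ is a unit multiple of the reduction of $v/p^{m_{p,s}}$. Using the original equation $\sum_{j,i} z_{j,i}\va_{j,i}=\vzero$, one can rewrite $v=-\sum_{s'\ge s}\sum_j\sum_i z_{j,i}\va_{j,i}$, which shows that every entry of $v$ is divisible by $p^{m_{p,s}}$ in $\Z$ so $v/p^{m_{p,s}}$ is an integer vector; meanwhile the original definition of $v$ places it inside $\mathrm{span}_\Q(\B_s)$. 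Hence $v/p^{m_{p,s}} \in \Z^D \cap \mathrm{span}_\Q(\B_s)$.

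The main technical obstacle is guaranteeing that this integer vector actually reduces modulo $p$ to an $\F_p$-combination of $\{e_\iota \bmod p\}$, since its expansion in the $\Q$-basis $\B_s$ could a priori carry $p$-divisible denominators. I would resolve this by a lattice-index argument: the lattice $\Lambda := \Z^D \cap \mathrm{span}_\Q(\B_s)$ contains the $\Z$-span of $\B_s$ as a sublattice of some finite index $d$, so every $w \in \Lambda$ has $\B_s$-coordinates with denominator dividing $d$. For each $p \in P_1$ coprime to $d$---only finitely many primes discarded---the reduction $w \bmod p$ lies in the $\F_p$-span of $\{e_\iota \bmod p\}$. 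Applied to $w = v/p^{m_{p,s}}$, the first row of $W_s(\beta_p)\bmod p$ sits in the $\F_p$-span of the remaining rows, every $R \times R$ submatrix is singular mod $p$, and the determinant polynomial acquires the claimed root in $\F_p^{k-1}$.
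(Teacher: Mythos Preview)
Your argument for the vanishing of every maximal minor at $(\beta_{p,2},\ldots,\beta_{p,k})$ over $\F_p$ is correct, and is in fact more careful than the paper's own treatment. The paper simply asserts that \eqref{leveltau} makes the rows of $W_s(\beta_p)$ ``linearly dependent as elements in a $\Z/p^{m_{p,s}+1}\Z$-module, so linearly dependent as elements in an $\F_p$-vector space'', glossing over the fact that the $e_\iota$ form only a $\Q$-basis, not a $\Z$-basis, for the span of the vectors $\va_{j,i}$ appearing on the left of \eqref{leveltau}. Your lattice-index step---discarding the finitely many primes dividing $[\Lambda:\spn_\Z(\B_s)]$---is exactly what is needed to carry $v/p^{m_{p,s}}\in\Z^D\cap\spn_\Q(\B_s)$ down to $\spn_{\F_p}(e_\iota\bmod p)$, and the rewriting $v=-\sum_{s'\ge s}(\cdots)$ to get $p^{m_{p,s}}\mid v$ is a clean way to see integrality.

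Where the proposal falls short is the word ``inhomogeneous''. You write that the determinant is ``either the zero polynomial or a nonzero affine (`inhomogeneous') polynomial'', but nonzero affine is not the same as inhomogeneous: in the paper, inhomogeneous means the \emph{constant term} is nonzero, and this is precisely what is used afterwards (Section~4.4) to ensure that the common rational root produced by Lemma~\ref{commonroot} is not the trivial $(0,\ldots,0)$. The paper secures this through a case split you omit. If $\vA_{1,s}\in\spn_\Q(\B_s)$, then every constant term vanishes and the level-$s$ polynomials are simply discarded, since the $s$-th clause of the $k$-columns condition already holds with $\delta_1=1$ and the other $\delta_j=0$. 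Otherwise $\B_s\cup\{\vA_{1,s}\}$ is $\Q$-independent, and the paper invokes Proposition~\ref{finiteprimematrix} to throw out the finitely many primes where this set becomes $\F_p$-dependent; on the surviving primes some maximal minor built from the witnessing columns has nonzero constant term. Your write-up establishes the root but not this half of the claim.
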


	\begin{proof}[Proof of Lemma \ref{matrix2polynomial}]
		First, for every $p \in P_1$, the rows of $W_s (\beta_{p, 2}, \dots, \beta_{p, k})$ are linearly dependent as elements in a $\Z / p^{m_{p, s} + 1} \Z$-module, so linearly dependent as elements in an $\F_p$-vector space. Then
		\begin{equation}
			\det ((W_s(\beta_{p, 2}, \dots, \beta_{p, k}))_{c_1, \dots, c_R}) = 0 \in \F_p.
		\end{equation}
	
		To avoid just sharing the trivial solution in $\F_p$'s, we expect the polynomials $$\det ((W_s(z_2, \dots, z_k))_{c_1, \dots, c_R})$$ to be inhomogeneous. For linear polynomials, this is equivalent to having nonzero constant term. One way to do this is to exclude primes such that $\B_s \cup \{\vA_{1, s}\}$ is linearly dependent over $\F_p$.
		
		If for all $s$, the set $\B_s \cup \{\vA_{1, s}\}$ is linearly dependent over $\Q$, then we always have $\vA_{1,s} \in span(\B_s)$. In this case, $\vA_{1,s}$ is already a linear combination of the vectors in $$\bigcup_{s' < s} \bigcup_{j=1}^k \{\va_{j,i}: i \in I^j_{s'}\}$$
		for all $s > 1$, and we simply ignore all the polynomials.
		
		When in the case that for some $s$, the set $\B_s \cup \{\vA_{1, s}\}$ is linear independent over $\Q$, then with Proposition \ref{finiteprimematrix}, we know for only finitely many primes $p \in P_1$, $\B_s \cup \{\vA_{1, s}\}$ is linearly dependent over $\F_p$. Exclude those primes from $P_1$, and we still get an infinite subset $P_2 \subset P_0$. For every $p \in P_2$ and the $s$ such that $\B_s \cup \{\vA_{1, s}\}$ is linear independent over $\Q$, we have
		$$\det ((W_s(z_2, \dots, z_k))_{c_1, \dots, c_R})$$
		inhomogenous.
	\end{proof}

	\begin{remark}
		In Lemma \ref{matrix2polynomial}, we assume $R \le D$. The opposite case is trivial, wherein we see the number of rows is strictly less than the number of columns. This means the columns must be linearly dependent, regardless of the field the vector space is defined on.
	\end{remark}

	Now, the polynomials generated by the determinants, along with those from equation (\ref{level1}), all share the common root $(\beta_{p, 2}, \dots, \beta_{p, k})$ over the field $\F_p$. Notice that they are all linear polynomials. Denote the collection of polynomials by $G$. By virtue of Lemma~\ref{commonroot}, there exists some $\gamma_2, \dots, \gamma_k \in \Q$ such that for each $g \in G$, $g(\gamma_2, \dots, \gamma_k) = 0$.
	
	\begin{remark}
		The following must hold simultaneously to prevent us from getting any polynomials:
		\begin{itemize}
			\item every $\vA_{j,1} = \vzero$,
			
			\item for each $s > 1$, $D < 1 + \abs{\B_s}$ or $\B_s \cup \{\vA_{1,s}\}$ is $\Q$-linearly.
		\end{itemize}
		Observe that, when every $\vA_{j,1} = \vzero$, the choice of $\delta_j$'s has nothing to do with the 1st rank.
		
		For $s > 1$, if $\B_s \cup \{\vA_{1,s}\}$ is $\Q$-linearly, we know $\vA_{1,s} \in span(\B_s)$. The other case is a bit tricky. Notice that $\B_s$ is a linearly independent subset of
		$$\bigcup_{s' < s} \bigcup_{j=1}^k \{\va_{j,i}: i \in I_{j,s'}\},$$
		which consists of vectors in $\Q^D$. Any linearly independent subset would not exceed a size of $D$, so $\abs{\B_s} \le D$. If $D < 1 + \abs{\B_s}$, then the only possible case is $\abs{\B_s} = D$. This means the set $\B_s$ contains $D$-many $\Q$-linearly independent vectors, so $span(\B_s) = \Q^D$. This gives $\vA_{1,s} \in span(\B_s)$ as well.
		
		With the above two requirements hold, we may set $\delta_1 = 1$ and $\delta_2 = \cdots = \delta_k = 0$ and complete the proof early.
	\end{remark}

	\subsection{Polynomial Implications}
	
	If at the end we get no polynomials from the above process, by the remark we know that
	
	\begin{itemize}
		\item $\vA_{1, 1} = \vzero$;
		
		\item For each $s > 1$, $\vA_{1, s}$ is a linear combination of $$\bigcup_{s' < s} \bigcup_{j=1}^k \{\va_{j,i}: i \in I_{j,s'}\}.$$
	\end{itemize} 
	
	In this case, simply choosing $\delta_1 = 1$ and $\delta_2 = \cdots = \delta_k = 0$ verifies the $k$-Columns Condition.
	
	\vspace{0.3cm}
	On the other hand, suppose the set of polynomials $G$ in non-empty. They share a common root $(\gamma_2, \dots, \gamma_k)$ over $\Q$. This means
	\begin{equation*}
		\vA_{1, 1} + \gamma_2 \vA_{2, 1} + \cdots + \gamma_k \vA_{k, 1} = \vzero \in \Q^D
	\end{equation*}
	and for each $s > 1$, the rows of $W_s (\gamma_2, \dots, \gamma_k)$ are linearly dependent over $\Q$. That is, each $$\vA_{1, s} + \gamma_2 \vA_{2, s} + \cdots + \gamma_k \vA_{k, s}$$ is a linear combination of $\B_s$ over $\Q$, where $\B_s$ is simply a subset of $$\bigcup_{s' < s} \bigcup_{j=1}^k \{\va_{j,i}: i \in I_{j,s'}\}.$$
	
	By assumption, at least some of the polynomials in $G$ are inhomogeneous, which means the shared rational solution $\gamma$ is not trivial. Let $\delta_1$ be some large enough common multiple of the nonzero denominators of $\gamma_j$, $2 \le j \le k$, and $\delta_j = \gamma_j \delta_1$ for each $2 \le j \le k$. Then for each $1 \le s \le t$, $\sum_{j=1}^k \delta_j \vA_{j, s}$ is a $\Z$-linear combination of vectors in
	$$\bigcup_{s' < s} \bigcup_{j=1}^k \{\va_{j, i}: i \in I_{j,s'}\}.$$
	This ultimately gives the $k$-columns condition.

	\vspace{1cm}
	\section{Discussions}

	\subsection{Computational Meanings}
	
	The \emph{Compactness Principle} states that, given any regular system $S$, there exists some sufficiently large integer $R$ such that when $[R]$ is  $r$-colored, there is a monochromatic solution to $S$ within $[R]$. We call the least such number $R$ the \emph{Rado number}, denoted by $\rado(S, r)$. There is no general big-O bound on $\rado(S, r)$ presented in \cite{grs90}, but the numbers on specific equations have been studied (see e.g. \cite{foxkleit}, \cite{radobound}, and \cite{ newradobound}). 
	
	Observe that our $k$-columns condition is weaker than the columns condition. It should be ``easier'' to find a coefficient matrix that satisfies a weaker condition in the sense that the searching range for the entries can be smaller. When we require only the semi-regularity of a system of linear equations, it should take shorter time to actually find out a semi-monochromatic solution.
	
	\subsection{Conjectures Beyond the Problem}
	
	From \cite{canonical} we know that the Rado's Theorem (Theorem \ref{rado}) can be generalized to arbitrary abelian groups. Notice that any ring has an underlying abelian group, so we may consider the multiplication by coefficients as the ring multiplication. We want to see if our results also apply in other rings. 
	
	Recall Definition \ref{dk-system} and Definition \ref{kcol}, where we only considered equations with integer coefficients. We may extend these definitions to the field of fractions, i.e., the rationals. Suppose a matrix $M \in M_{d \times n}(\Q)$ is the coefficient matrix of a $(D,k)$-system (in the extended sense). Construct a matrix $M' \in M_{d \times n} (\Z)$ simply by multiplying $M$ by the LCM of all denominators of its nonzero coefficients. The matrix $M'$ satisfies the $k$-columns conditions (in the extended sense) if and only if $M$ satisfies the $k$-columns conditions (in the original sense). 
	
	This generalization applies to any arbitrary ring and its field of fractions. For the conjecture below, we will consider linear systems with coefficients in some bigger fields. Before that, we need to introduce some number theory concepts.

	\begin{definition}
		\label{algint}
		A number $\alpha \in \C$ is called an \emph{algebraic integers} if it is a root of some monic polynomial in $\Z[x]$. The set of all algebraic integers in $\C$ forms a ring and is denoted by $\bar{\Z}$.
	\end{definition}
	
	\begin{definition}
		\label{numfield}
		$K$ is called an \emph{algebraic number field} (or simply \emph{number field}) if $K \subset \C$ is a field such that $[K:\Q] < \infty$. Its \emph{ring of integers} is the set $\roi_K = K \cap \bar{\Z}$. Note that $\roi_K$ is a ring.
	\end{definition}
	
	Now we may identify $\Z$ as $\Q \cap \bar{\Z}$, the ring of integers of $\Q$. Recall the smod $p$ coloring in Section 4.1, and we want to find an analogue of such coloring in $\roi_K$. Let $a \in \roi_K$ and $p \in \roi_K$ be some prime element. Then $(p)$ is a prime ideal. 
	
	Asssume further that $\roi_K$ is a PID. Then it is a UFD, so any $a \in \roi_K \setminus \{0\}$ has a unique factorization that gives $a = p^m b$ for some unique $m \in \N$ and $b \in \roi_K$ coprime with $p$. Define the color of $a$ as the coset $b + (p)$. Also, define the color of $0$ to be the coset $0 + (p) = (p)$. Since $\roi_K / (p)$ is a finite field, this is a well-defined finite coloring of $\roi_K$.
	
	With this coloring, one can work on the modulo equations and polynomials over $K$ and $\roi_K / (p)$ with prime element $p \in \roi_K$. We extrapolate that a procedure similar to Section 4 should lead to the following result.
	
	\begin{conjecture}
		\label{rado-roi}
		Let $K \subset \C$ be a number field. Assume further that its ring of integers $\roi_K$ is a PID. Now suppose $S$ is a $(D, k)$-system
		\begin{equation}
			\sum_{j=1}^{k} \sum_{i=1}^{N_j} \va_{j, i} x_{j,i} = \vzero
		\end{equation}
		with coefficients $\va_{j, i}^{(d)}$ in $K$.
		
		We call $S$ is semi-regular if it admits a semi-monochromatic solution in $\roi_K$ under any finite coloring of $\roi_K$. Then $S$ is semi-regular if and only if it satisfies the $k$-columns condition.
	\end{conjecture}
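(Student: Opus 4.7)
My plan is to mirror the proof of Theorem~\ref{kgeneral}, replacing $\Z$ by $\roi_K$, $\Q$ by $K$, and $\F_p$ by the residue field $\roi_K/(p)$ for a prime element $p \in \roi_K$; the role of the smod $p$ coloring is played by the coset coloring $\sigma_p$ sketched in the discussion preceding the conjecture. As a preliminary reduction, since $K = \Frac(\roi_K)$, I multiply the system through by a common denominator in $\roi_K$; this rescaling preserves both semi-regularity and the $k$-columns condition, so I may assume all coefficients lie in $\roi_K$. The sufficiency direction then repeats Section~\ref{sufficiency} essentially verbatim, provided I invoke the abelian-group version of Rado's theorem from \cite{canonical} applied to the additive group $(\roi_K, +)$, which is finitely generated and free abelian. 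Given a finite coloring $\chi$ of $\roi_K$, a $k$-columns witness $(\delta_1, \ldots, \delta_k)$ and the product coloring $\chi^\ast(z) = (\chi(\delta_j z))_{j \le k'}$ produce a nontrivial semi-monochromatic solution exactly as in the integer case.

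For necessity I adapt Section~\ref{necessity}. The coloring $\sigma_p$ is finite because every nonzero prime ideal of $\roi_K$ has finite residue field, and the $p$-adic order $o_p$ is well defined because $\roi_K$ is a PID, hence a UFD. Fix a semi-regular $(D,k)$-system $S$. For each prime element $p$---and $\roi_K$ has infinitely many, being a Dedekind domain that is not a field---semi-regularity supplies a nontrivial semi-monochromatic solution $(z_{j,i})$ under $\sigma_p$. I partition each $[N_j]$ by the common $p$-adic order of the $z_{j,i}$'s, then extract by pigeonhole an infinite subset $P_0$ of primes on which the partition $(I_{j,s})$ is constant. Reducing $\sum z_{j,i} \va_{j,i} = \vzero$ modulo successive powers $p^{m_{p,s}+1}$ and normalizing by $\alpha_{p,j_\ast}^{-1} \in \roi_K/(p)$ for some index $j_\ast$ at which $\alpha_{p,j_\ast} \neq 0$ infinitely often yields the direct analogues of (\ref{level1}) and (\ref{leveltau}).

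From these congruences I build the matrices $W_s(z_2, \ldots, z_k)$ as in (\ref{matrixWs}), collect the linear polynomials arising from their maximal-minor determinants together with the first-rank relation, and conclude that this family shares the common root $(\beta_{p,2}, \ldots, \beta_{p,k}) \in (\roi_K/(p))^{k-1}$ for every $p$ in an infinite subset $P_2 \subset P_0$. Two preparatory lemmas must be transplanted to this setting: the analogue of Proposition~\ref{finiteprimematrix}, which follows from the fact that a nonzero element of $\roi_K$ is divisible by only finitely many prime ideals; and the analogue of Lemma~\ref{commonroot}, for which the original argument carries over, since any inconsistency witness $\sum c_i f_i = c \in K^\ast$ would force $c \in (p)$ for infinitely many $p$, contradicting the finiteness of the prime factorization of $c$ (after clearing denominators). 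With both lemmas, the polynomials share a common root $(\gamma_2, \ldots, \gamma_k) \in K^{k-1}$, and clearing denominators in $K$ yields $\delta_1, \ldots, \delta_k \in \roi_K$ verifying the $k$-columns condition; the edge case where no polynomials are produced (all $\vA_{j,1} = \vzero$ and each $\vA_{1,s} \in \spn(\B_s)$) is handled by $\delta_1 = 1$, $\delta_{\ge 2} = 0$, as in the integer case.

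The main obstacle I foresee is not the polynomial scaffolding---which transplants almost formally---but the uniform verification of the two auxiliary lemmas in the full generality of number fields. The linear-algebra lemma is immediate from unique factorization of ideals in a PID, but the common-root lemma depends on producing a Farkas-type witness for an inconsistent affine system over $K$; the cleanest route I see is to argue directly from the solvability criterion for inhomogeneous linear systems over a field, which is insensitive to whether the field is $\Q$ or a general number field. A secondary subtlety is on the sufficiency side, where I must make sure the abelian-group Rado theorem applies to colorings of all of $\roi_K$ (including $0$), discarding only the trivial all-zero solution in the sense of the remark following Definition~\ref{col}. Beyond the PID hypothesis, the same scheme should extend to any Dedekind domain with finite residue fields once the rank partition is redefined to track prime ideals rather than prime elements, suggesting a broader strengthening once Conjecture~\ref{rado-roi} is established.
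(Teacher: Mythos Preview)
The paper does not prove this statement: it is explicitly labeled a \emph{conjecture}, and the text preceding it says only that ``a procedure similar to Section~4 should lead to the following result.'' There is therefore no proof in the paper to compare against. Your proposal is precisely the transplantation the author gestures at but does not carry out.

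That said, your sketch has a genuine soft spot on the sufficiency side. In Section~\ref{sufficiency} the paper reduces to the classical Rado theorem over $\Z_+$; your analogue requires a Rado-type theorem asserting that a system over $\roi_K$ satisfying the ordinary columns condition (with $\roi_K$- or $K$-linear combinations) is partition regular in $\roi_K$. Invoking ``the abelian-group version of Rado's theorem from \cite{canonical} applied to $(\roi_K,+)$'' is not obviously sufficient: as an abelian group $\roi_K \cong \Z^{[K:\Q]}$, and the abelian-group Rado theory concerns $\Z$-linear combinations of columns, whereas the $k$-columns condition here is stated over $K$. You would need either a Rado theorem for modules over $\roi_K$, or an argument that the $K$-columns condition forces a $\Z$-columns condition on the associated $\Z$-system in $D\cdot[K:\Q]$ coordinates. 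This is not automatic and is exactly the kind of gap that presumably keeps the statement a conjecture.

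The necessity direction of your sketch is closer to airtight: the analogues of Proposition~\ref{finiteprimematrix} and Lemma~\ref{commonroot} do go through for the reasons you give (a nonzero element of $\roi_K$ lies in only finitely many prime ideals; an inconsistent affine system over a field admits a Farkas witness regardless of the field). The paper's author appears to agree with you that this half is the routine one.
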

	
	\begin{remark}
		In the definition of smod $p$ coloring, we only make use of the unique factorization property. So, technically speaking, assuming that $\roi_K$ is UFD would be enough. But it is easier to check whether $\roi_K$ is a PID with the Minkowski's bound.
	\end{remark}

\section*{Acknowledgements}

I would like to thank Professor Boris Bukh for his guidance and mentorship throughout this project. I deeply appreciate the generosity with which he has given me his time and energy. This project was funded by Undergraduate Research Office, Carnegie Mellon University.

	\bibliography{reference.bib}
	\bibliographystyle{alpha}
	\vspace{0.5cm}

\end{document}